\documentclass[12pt]{amsart}
\usepackage{amssymb,amsmath,amsthm}
\usepackage{enumerate}
\usepackage{mathrsfs}
\newtheorem{theorem}{Theorem}[section]

\newtheorem{corollary}[theorem]{Corollary}
\theoremstyle{definition}

\newtheorem{example}[theorem]{Example}
\newtheorem{remark}[theorem]{Remark}

\newcommand{\clh}{\mathcal{H}}

\newcommand{\raro}{\rightarrow}

\setlength{\textheight}{21.5cm}

\setlength{\textwidth}{15.5cm} \oddsidemargin 0.25cm
\evensidemargin .25cm

\usepackage[utf8]{inputenc}
\newcommand\hlight[1]{\tikz[overlay, remember picture,baseline=-\the\dimexpr\fontdimen22\textfont2\relax]\node[rectangle,fill=white!50,rounded corners,fill opacity = 0.2,draw,thick,text opacity =1] {$#1$};}

\title{Orthogonality in Banach Spaces via projective tensor product}

\author[Dhara]{Kousik Dhara}
\address{Kousik Dhara, Indian Statistical Institute,
Statistics and Mathematics Unit, 8th Mile, Mysore Road,
Bangalore, 560 059, India.}
\email{kousik.dhara1@gmail.com}

\author[Rakshit]{Narayan Rakshit}
\address{Narayan Rakshit, Indian Statistical Institute,
Statistics and Mathematics Unit, 8th Mile, Mysore Road,
Bangalore, 560 059, India.}
\email{narayan753@gmail.com}

\author[Sarkar]{Jaydeb Sarkar}
\address{Jaydeb Sarkar, Indian Statistical Institute,
Statistics and Mathematics Unit, 8th Mile, Mysore Road,
Bangalore, 560 059, India.}
\email{jaydeb@gmail.com, jay@isibang.ac.in}

\author[Sensarma]{Aryaman Sensarma}
\address{Aryaman Sensarma, Indian Statistical Institute,
Statistics and Mathematics Unit, 8th Mile, Mysore Road,
Bangalore, 560 059, India.}
\email{aryamansensarma@gmail.com}

\subjclass[2010]{46B28, 47A30, 46B20, 46B22, 47B01, 47L05}
\keywords{Orthogonality, projective tensor product, semi-inner product}

\numberwithin{equation}{section}
\begin{document}
\begin{abstract}
Let $X$ be a complex Banach space and $x,y\in X$. By definition, we say that $x$ is Birkhoff-James orthogonal to $y$ if $
\|x+\lambda y\|_{X} \geq \|x\|_{X}$ for all $\lambda \in \mathbb{C}$. We prove that $x$ is Birkhoff-James orthogonal to $y$ if and only if
there exists a semi-inner product $\varphi$ on $X$ such that $\|\varphi\| = 1$, $\varphi(x,x)=\|x\|^2$ and $\varphi(x,y)=0$. A similar result holds for $C^*$-algebras. A key point in our approach to orthogonality is the representations of bounded bilinear maps via projective tensor product spaces.
\end{abstract}
\maketitle
\section{Introduction}

This paper deals with the notion of Birkhoff-James orthogonality \cite{BIR,JA1,JA2} for vectors in Banach spaces via projective tensor product of Banach spaces. Let $X$ be a normed linear space over the scalar field $\mathbb{K}$, where $\mathbb{K}$ is either $\mathbb{R}$ or $\mathbb{C}$. Suppose $x, y \in X$. We say that $x$ is \textit{Birkhoff-James orthogonal} (or simply \textit{orthogonal}) to $y$ if
\[
\|x+\lambda y\|_{X} \geq \|x\|_{X},
\]
for all $\lambda \in \mathbb{K}$. We denote this by $x \perp_B y$. If $X$ is a Hilbert space, then this notion coincides with the usual orthogonality, that is, $x \perp_B y$ if and only if
\[
\langle x, y \rangle_{X} = 0.
\]
The objective of this note is to present an abstract characterization of orthogonality of vectors in Banach spaces over $\mathbb{K}$. We are motivated by Bhatia and \v{S}emrl's investigation \cite[Theorem 1.1 and Remark 3.1]{{BS}} of orthogonality in the setting of bounded linear operators on Hilbert spaces:

\begin{theorem}\label{thm-Bhatia and Semrl}(Bhatia and \v{S}emrl)
Let $S$ and $T$ be bounded operators on a Hilbert space $\clh$.

(i) Then $S \perp_B T$ if and only if there exists a sequence $\{h_n\}$ of unit vectors such that $\|S h_n\| \raro \|S\|$, and $\langle S h_n, T h_n \rangle \raro 0$ as $n \raro \infty$.

(ii) If we additionally assume that $\clh$ is a finite dimensional Hilbert space, then $S \perp_B T$ if and only if there exists a unit vector $h \in \clh$ such that
\[
\|S h\| = \|S\| \quad \mbox{and}\quad  \langle Sh, Th \rangle = 0.
\]
\end{theorem}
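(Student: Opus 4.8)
My plan is to reduce the whole statement to the elementary expansion, valid for every unit vector $h\in\clh$ and every scalar $\lambda$,
\[
\|(S+\lambda T)h\|^2=\|Sh\|^2+2\,\mathrm{Re}\!\left(\bar\lambda\,\langle Sh,Th\rangle\right)+|\lambda|^2\|Th\|^2 .
\]
The sufficiency in both (i) and (ii) is the easy half. Given a sequence $\{h_n\}$ as in (i), I would discard the nonnegative term $|\lambda|^2\|Th_n\|^2$ to get $\|S+\lambda T\|^2\ge\|Sh_n\|^2+2\,\mathrm{Re}(\bar\lambda\langle Sh_n,Th_n\rangle)$, and let $n\raro\infty$; since $\|Sh_n\|\raro\|S\|$ and $\langle Sh_n,Th_n\rangle\raro0$, this yields $\|S+\lambda T\|\ge\|S\|$ for every $\lambda$, i.e. $S\perp_B T$. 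The single-vector hypothesis in (ii) gives the same conclusion verbatim, with limits replaced by equalities.

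For necessity I would dispose of the finite-dimensional statement (ii) first, where the geometry is transparent. The key observation is that the set of norming vectors $\clm=\{h:\|h\|=1,\ \|Sh\|=\|S\|\}$ is precisely the unit sphere of the eigenspace $V\subseteq\clh$ of $S^*S$ for its largest eigenvalue $\|S\|^2$; in finite dimensions $\clm$ is nonempty and compact. Writing $\langle Sh,Th\rangle=\langle h,S^{*}Th\rangle$ and using $P_V h=h$ for $h\in V$, the set $C=\{\langle Sh,Th\rangle:h\in\clm\}$ is, up to complex conjugation, the numerical range of the compression $P_V(S^{*}T)|_V$ regarded as an operator on $V$, hence convex by the Toeplitz--Hausdorff theorem and compact. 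It therefore suffices to show $0\in C$. To extract this from $S\perp_B T$, I would fix $\mu$ with $|\mu|=1$ and differentiate $t\mapsto\|S+t\mu T\|^2=\max_{\|h\|=1}\big(\|Sh\|^2+2t\,\mathrm{Re}(\bar\mu\langle Sh,Th\rangle)+t^2\|Th\|^2\big)$ at $t=0^+$; by a Danskin-type formula its right derivative equals $2\max_{z\in C}\mathrm{Re}(\bar\mu z)$, and orthogonality forces this to be $\ge0$. Thus the support function of $C$ is nonnegative in every direction $\mu$, which places $0$ in the closed convex hull of $C$; as $C$ is already convex, $0\in C$, producing the desired unit vector.

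For the general Hilbert space (i) the same mechanism is run with near-maximizers replacing exact ones. In place of $C$ I would use the ``relative maximal numerical range''
\[
\Lambda=\bigcap_{\varepsilon>0}\ \overline{\{\langle Sh,Th\rangle:\|h\|=1,\ \|Sh\|^2>\|S\|^2-\varepsilon\}},
\]
a nonempty compact set for which $0\in\Lambda$ is exactly the existence of the sequence demanded in (i). For each $t>0$ I pick a unit vector $h_t$ with $\|(S+t\mu T)h_t\|^2>\|S+t\mu T\|^2-t^2$; the identity then forces $\|Sh_t\|\raro\|S\|$ as $t\raro0^+$ (so every cluster value of $\langle Sh_t,Th_t\rangle$ lies in $\Lambda$), while simultaneously $\frac{\|S+t\mu T\|^2-\|S\|^2}{2t}\le\mathrm{Re}(\bar\mu\langle Sh_t,Th_t\rangle)+O(t)$. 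Since orthogonality makes the left side nonnegative, passing to a cluster value gives $\sup_{z\in\Lambda}\mathrm{Re}(\bar\mu z)\ge0$ for every $\mu$, hence $0$ lies in the closed convex hull of $\Lambda$.

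The crux, and the step I expect to be the main obstacle, is to upgrade $0\in\overline{\mathrm{conv}}\,\Lambda$ to $0\in\Lambda$. Unlike the finite-dimensional case, $\Lambda$ is not visibly a numerical range, so Toeplitz--Hausdorff does not apply directly; what is needed is a Stampfli-type theorem asserting that this relative maximal numerical range is itself convex. Establishing that convexity, by a mixing argument on pairs of near-maximizing vectors in the spirit of the usual proof of the Toeplitz--Hausdorff theorem while keeping both vectors norm-maximizing in the limit, is the delicate technical core. Once it is in hand, $0\in\overline{\mathrm{conv}}\,\Lambda=\Lambda$ delivers the required sequence and completes (i).
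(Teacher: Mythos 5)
This statement is not proved in the paper at all: it is imported verbatim from Bhatia and \v{S}emrl \cite[Theorem 1.1 and Remark 3.1]{BS} and used as a black box, so there is no in-paper argument to compare yours against; your proposal has to stand on its own merits. The easy halves do stand: the expansion of $\|(S+\lambda T)h\|^2$ gives sufficiency in both (i) and (ii) exactly as you say, and your proof of necessity in (ii) --- identifying the norming set $\clm$ with the unit sphere of the top eigenspace $V$ of $S^*S$, recognizing $C=\{\langle Sh,Th\rangle : h\in\clm\}$ as (the conjugate of) the numerical range of the compression $P_V S^*T|_V$, hence compact and convex by Toeplitz--Hausdorff, then using the Danskin one-sided derivative plus separation to force $0\in C$ --- is a complete and standard route, essentially the original finite-dimensional argument.

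The genuine gap is exactly where you flag it: in part (i) you reduce everything to the convexity of the relative maximal numerical range $\Lambda$, and then stop. That convexity is not a technical footnote; it \emph{is} the hard content of the infinite-dimensional statement, and without it your argument only yields $0\in\overline{\mathrm{conv}}\,\Lambda$, which does not suffice (a convex combination of values $\langle Sh,Th\rangle$ over near-maximizing vectors being small says nothing about any single near-maximizing vector having small $\langle Sh,Th\rangle$). The gap is fillable, and more cheaply than by redoing a Toeplitz--Hausdorff-style mixing argument on pairs of near-maximizers: for $\delta>0$ let $P_\delta$ be the spectral projection of $S^*S$ for the interval $[\|S\|^2-\delta,\|S\|^2]$. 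Any unit vector $h$ with $\|Sh\|^2>\|S\|^2-\varepsilon$ satisfies $\|(I-P_\delta)h\|^2\le \varepsilon/\delta$, while every unit $h\in\mathrm{ran}\,P_\delta$ has $\|Sh\|^2\ge\|S\|^2-\delta$; from these two facts one checks that
\[
\Lambda=\bigcap_{\delta>0}\overline{\{\langle Sh,Th\rangle:\ h\in\mathrm{ran}\,P_\delta,\ \|h\|=1\}},
\]
and each set in this nested intersection is the closure of (the conjugate of) the numerical range of the compression $P_\delta S^*TP_\delta$ restricted to $\mathrm{ran}\,P_\delta$, hence convex by Toeplitz--Hausdorff. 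As a nested intersection of nonempty compact convex sets, $\Lambda$ is nonempty, compact and convex, so $\overline{\mathrm{conv}}\,\Lambda=\Lambda$ and your argument closes. Until some such lemma is supplied, the necessity direction of (i) remains unproven in your write-up.
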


We treat the above Bhatia-\v{S}emrl result (more specifically, part (ii) of Theorem \ref{thm-Bhatia and Semrl}) as a paradigm and examine orthogonality of vectors in Banach spaces in terms of semi-inner products. Our main result, in the setting of Banach spaces, is the following:

\begin{theorem}\label{Complex Case}
Let $X$ be a Banach space, $x,y\in X$, and $x \neq 0$. Then $x \perp_B y$ if and only if there exists a semi-inner product $\varphi: X\oplus_{\infty} X \rightarrow \mathbb{K} $ such that $\|\varphi\| = 1$ and
\[
\varphi(x,x)=\|x\|^2 \quad \mbox{and}\quad  \varphi(x,y)=0.
\]
\end{theorem}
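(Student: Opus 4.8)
The plan is to prove the two implications separately, with the ``if'' direction reducing to a one-line boundedness estimate and the ``only if'' direction carrying the real content.

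For the reverse implication, suppose a semi-inner product $\varphi$ with the stated properties exists. For each $\lambda \in \mathbb{K}$ I would expand $\varphi(x, x+\lambda y)$ using the conjugate-linearity of $\varphi$ in its second slot together with $\varphi(x,y)=0$, obtaining $\varphi(x,x+\lambda y)=\varphi(x,x)=\|x\|^2$. Since $\varphi$ is bounded with $\|\varphi\|=1$, this forces
\[
\|x\|^2 = |\varphi(x,x+\lambda y)| \le \|\varphi\|\,\|x\|\,\|x+\lambda y\| = \|x\|\,\|x+\lambda y\|.
\]
As $x\neq 0$, dividing by $\|x\|$ gives $\|x\| \le \|x+\lambda y\|$ for every $\lambda$, which is exactly $x\perp_B y$. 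Note this uses only boundedness and sesquilinearity, not positivity.

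For the forward implication, the first step is to convert the metric condition into a supporting-functional statement. Since $x\perp_B y$ is equivalent to $\operatorname{dist}(x,\mathbb{K}y)=\|x\|$ (because $\inf_\lambda\|x+\lambda y\|\le\|x\|$ always, with equality precisely when $x\perp_B y$), the vector $0$ is a best approximation to $x$ from the line $\mathbb{K}y$. By the Hahn-Banach duality for the distance to a subspace there exists $f\in X^*$ with $\|f\|=1$, $f(y)=0$, and $f(x)=\operatorname{dist}(x,\mathbb{K}y)=\|x\|$, where over $\mathbb{C}$ one rotates by a unimodular scalar so that $f(x)$ is the nonnegative real number $\|x\|$. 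The second step is to manufacture the form from this single functional by setting
\[
\varphi(u,v) = f(u)\,\overline{f(v)} \qquad (u,v\in X).
\]
This $\varphi$ is sesquilinear and positive semidefinite, hence a semi-inner product; it satisfies $\varphi(x,x)=|f(x)|^2=\|x\|^2$ and $\varphi(x,y)=f(x)\overline{f(y)}=0$; and $|\varphi(u,v)|\le\|f\|^2\|u\|\|v\|=\|u\|\|v\|$ gives $\|\varphi\|\le 1$, while $\varphi(x,x)=\|x\|^2$ with $x\neq 0$ forces $\|\varphi\|\ge 1$. Hence $\|\varphi\|=1$.

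I expect the main obstacle to be the forward direction, and specifically the requirement that $\varphi$ be genuinely positive (a \emph{semi-inner product}) while at the same time having norm exactly $1$ and satisfying both scalar constraints. The rank-one choice $\varphi=f\otimes\bar f$ meets all three demands simultaneously, so the actual work is pushed entirely into producing the supporting functional $f$, i.e.\ into the best-approximation/Hahn-Banach step. Finally, this construction is the concrete shadow of the projective-tensor-product viewpoint advertised in the introduction: under the identification of bounded sesquilinear forms on $X\times X$ with the dual of a projective tensor product, $\varphi$ is the elementary tensor built from $f$, and the norm and positivity claims become transparent statements about that tensor. This is the formulation I would carry over to the $C^*$-algebra analogue, where the single functional must be upgraded to a positive form and the tensor representation handles the bookkeeping.
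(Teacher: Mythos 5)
Your proof is correct, and your reverse implication is the same one-line estimate the paper uses; but your forward direction takes a genuinely different, and substantially more elementary, route. The paper never invokes a supporting-functional argument: instead it embeds $X$ isometrically into $C((X^*)_1)$ and then, via multiplication operators, into $\mathscr{B}(L^2(\mu))$; applies part (i) of Theorem \ref{thm-Bhatia and Semrl} to produce unit vectors $h_n$ with $\|M_{\hat{x}}h_n\|\to\|M_{\hat{x}}\|$ and $\langle M_{\hat{x}}h_n,M_{\hat{y}}h_n\rangle\to 0$; forms the sesquilinear maps $\psi_n(z,w)=\langle M_{\hat{z}}h_n,M_{\hat{w}}h_n\rangle$; and extracts a weak$^*$ cluster point using the identification $\text{Bil}(X\oplus_{\infty}\overline{X},\mathbb{K})\simeq\mathscr{B}(X\hat{\otimes}_\pi\overline{X},\mathbb{K})$ of Theorem \ref{thm-known ptp} together with the Banach--Alaoglu theorem. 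Your replacement --- the classical James/Hahn--Banach fact that $x\perp_B y$ is equivalent to $\operatorname{dist}(x,\mathbb{K}y)=\|x\|$, which yields $f\in X^*$ with $\|f\|=1$, $f(x)=\|x\|$, $f(y)=0$, followed by the rank-one form $\varphi(u,v)=f(u)\overline{f(v)}$ --- is complete and correct: sesquilinearity, positivity, both scalar constraints, and $\|\varphi\|=1$ all check out, and the unimodular rotation handles the complex case. What you gain is brevity and an explicit (rank-one) semi-inner product, and your argument makes transparent that the theorem is in substance a repackaging of James' 1947 characterization of Birkhoff--James orthogonality. What the paper's heavier route buys is what its authors advertise as the point of the paper: a demonstration of the projective-tensor-product method, and a limiting form $\psi$ that arises from the vector states $\langle\,\cdot\,h_n,\,\cdot\,h_n\rangle$ in the spirit of the Bhatia--\v{S}emrl paradigm, a template meant to carry over to operator-theoretic settings where a single functional may not suffice. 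Your closing observation that $\varphi$ is the elementary tensor built from $f$ under the duality between bounded sesquilinear forms and $(X\hat{\otimes}_\pi\overline{X})^*$ is exactly the right way to situate your construction inside their framework, and it also disposes of Corollary \ref{C^*-algebra case} directly by composing with the adjoint, just as the paper does.
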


As an immediate consequence of the above, we also prove an orthogonality result in the setting of $C^*$-algebras (see Corollary \ref{C^*-algebra case}). Here, for a pair of Banach spaces $X$ and $Y$, we define $X\oplus_{\infty} Y$ to be the Banach space
\[
X \times Y = \{(x, y) : x \in X, y \in Y\},
\]
with the norm
\[
\|(x, y)\|_{\infty} = \max \{\|x\|_X, \|y\|_Y\},
\]
for all $x \in X$ and $y \in Y$. Also recall that a semi-inner product on a vector space $V$ is scalar-valued function $\varphi : V \times V \rightarrow \mathbb{K}$ such that for all $x,y,z\in V$ and $\alpha ,\beta \in \mathbb{K}$, we have
\begin{itemize}
\item[(a)] $\varphi(\alpha x+\beta y,z)=\alpha \varphi(x,z)+\beta \varphi(y,z)$,
\item[(b)] $\varphi(x,y)=\overline{\varphi(y,x)}$,
\item[(c)] $\varphi(x,x)\geq 0$.
\end{itemize}
It is easy to check that, if $\varphi$ is a semi-inner product, then $\varphi$ is an inner product if and only if $\varphi(x,x) = 0$ implies $x = 0$.

The main ingredients of our approach to the orthogonality problem are: (1)  Bhatia and \v{S}emrl's orthogonality of bounded linear operators on infinite-dimensional Hilbert spaces (part (i) of Theorem \ref{thm-Bhatia and Semrl} above), (2) projective tensor product techniques (see Theorem \ref{thm-known ptp} below), and (3) some standard Banach space techniques (like Banach-Alaoglu theorem).

Before we proceed with the main content of the paper, let us shortly review the existing literature on orthogonality. The notion of Birkhoff-James orthogonality is an active research area. In fact, Bhatia and \v{S}emrl's paper \cite{BS} on orthogonality of operators on Hilbert spaces has stimulated extensive research for the past two decades. For instance, orthogonality of a pair of compact operators acting on a reflexive Banach space to a normed linear space has been studied by Sain, Paul and Mal \cite{SPM}. Their investigation involves finer geometric Banach space techniques. Also see \cite{Sain} on orthogonality of linear operators on finite dimensional Banach spaces, \cite{Sain-Paul} on norm attainment and orthogonality, and \cite{TSS} on approximate orthogonality. More recent advances can be found, for instance, in the quickly growing literature
\cite{Arambai, TG, Chmieli, Komuro, Turn, Wojik} (also see the references therein).

Given Banach spaces $X$ and $Y$, we denote the Banach space of all bounded linear operators from $X$ to $Y$ by $\mathscr{B}(X,Y)$, and we let $X_1$ denote the closed unit ball in $X$. If $Y=X$, then we write $\mathscr{B}(X)$. A bilinear (sesquilinear) map $B: X\times Y \rightarrow Z$ is said to be bounded if there exists $M>0$ such that
\[
\|B(x,y)\|\leq M \quad \quad (x\in X_1, y\in Y_1).
\]
We denote the Banach space of all bounded bilinear (sesquilinear) maps from $X\times Y$ to $Z$ by $\text{Bil}(X\times Y, Z)$ ($\text{Ses}(X\times Y, Z)$). Here
\[
\|B\|=\sup \{\|B(x,y)\|: x \in X_1, \, y \in Y_1\},
\]
for all $B \in \text{Bil}(X\times Y, Z)$ ($B \in \text{Ses}(X\times Y, Z)$). As a tool for the proof of the main result, we use the notion of projective tensor product. The projective tensor product  $X\hat{\otimes}_\pi Y$ of Banach spaces $X$ and $Y$ is the completion of the algebraic tensor product $X \otimes Y$ under the projective norm
\[
\|u\|_\pi = \inf \Big\{ \sum _{i=1}^{n} \|x_i \| \|y_i\| :
u= \sum _{i=1}^{n}  x_i \otimes y_i \Big\}.
\]
Our key point is the following result \cite[Theorem 2.9]{RR} concerning representations of bounded bilinear maps via projective tensor product spaces.

\begin{theorem}\label{thm-known ptp}
Let $X, Y$ and $Z$ be Banach spaces, and let $B \in \text{Bil}(X \times Y,Z)$. Then there exists a unique $\tilde{B} \in \mathscr{B}(X\hat{\otimes}_\pi Y, Z)$ such that
\[
\tilde{B}(x\otimes y) = B(x, y) \quad \quad (x \in X, y \in Y).
\]
Moreover, the correspondence $B \longleftrightarrow \tilde{B}$ is an isometric isomorphism between $Bil(X \times Y,Z)$ and $\mathscr{B}(X\hat{\otimes}_\pi Y, Z)$.
\end{theorem}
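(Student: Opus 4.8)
The plan is to build $\tilde{B}$ from $B$ in three stages---first on the algebraic tensor product, then estimate it in the projective norm, and finally extend by continuity---before checking that $B \longleftrightarrow \tilde{B}$ is an isometric bijection. First I would invoke the universal property of the algebraic tensor product $X \otimes Y$: since $B$ is bilinear, there is a unique linear map $\tilde{B}_0 : X \otimes Y \rightarrow Z$ satisfying $\tilde{B}_0(x \otimes y) = B(x,y)$ for all $x \in X$ and $y \in Y$. At this stage no norm is involved, so the only content is the standard factorization of bilinear maps through $\otimes$.

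The crucial step is to control $\tilde{B}_0$ in the projective norm. For an arbitrary representation $u = \sum_{i=1}^{n} x_i \otimes y_i$ of an element $u \in X \otimes Y$, linearity gives $\tilde{B}_0(u) = \sum_{i=1}^{n} B(x_i, y_i)$, whence by the triangle inequality and boundedness of $B$,
\[
\|\tilde{B}_0(u)\| \leq \sum_{i=1}^{n} \|B(x_i, y_i)\| \leq \|B\| \sum_{i=1}^{n} \|x_i\| \, \|y_i\|.
\]
Because this holds for \emph{every} representation of $u$, I can pass to the infimum on the right-hand side to obtain $\|\tilde{B}_0(u)\| \leq \|B\| \, \|u\|_\pi$; this is precisely the place where the definition of the projective norm does the work. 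Thus $\tilde{B}_0$ is bounded on the dense subspace $X \otimes Y$ with $\|\tilde{B}_0\| \leq \|B\|$, and since $Z$ is complete it extends uniquely to a bounded linear map $\tilde{B} \in \mathscr{B}(X \hat{\otimes}_\pi Y, Z)$ with $\|\tilde{B}\| \leq \|B\|$ and $\tilde{B}(x \otimes y) = B(x,y)$.

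Finally I would verify that $B \longleftrightarrow \tilde{B}$ is an isometric isomorphism. Uniqueness of $\tilde{B}$ is immediate: any bounded operator agreeing with $B$ on elementary tensors agrees with $\tilde{B}_0$ on $X \otimes Y$ by linearity, hence with $\tilde{B}$ on the completion by density and continuity. For the reverse norm inequality, I would use that $\|x \otimes y\|_\pi \leq \|x\| \, \|y\| \leq 1$ whenever $x \in X_1$ and $y \in Y_1$, so that
\[
\|\tilde{B}\| \geq \sup_{x \in X_1, \, y \in Y_1} \|\tilde{B}(x \otimes y)\| = \sup_{x \in X_1, \, y \in Y_1} \|B(x,y)\| = \|B\|,
\]
giving $\|\tilde{B}\| = \|B\|$. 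Linearity of $B \mapsto \tilde{B}$ is clear from the construction, and surjectivity follows by running the argument backwards: any $T \in \mathscr{B}(X \hat{\otimes}_\pi Y, Z)$ restricts to a bounded bilinear map $(x,y) \mapsto T(x \otimes y)$ whose associated operator is $T$ itself. I do not anticipate a serious obstacle; the one point that genuinely uses the geometry of $\hat{\otimes}_\pi$ rather than soft algebra is the passage to the infimum in the boundedness estimate, so that is where I would take the most care.
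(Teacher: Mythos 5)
Your proposal is correct and complete. Note that the paper does not prove this statement at all: it is quoted verbatim from Ryan's book (\cite[Theorem 2.9]{RR}) and used as a black box, so there is no internal proof to compare against. Your argument --- factoring $B$ through the algebraic tensor product, passing to the infimum over representations to get $\|\tilde{B}_0(u)\| \leq \|B\|\,\|u\|_\pi$, extending by continuity, and recovering $\|\tilde{B}\| \geq \|B\|$ from the elementary tensors $x \otimes y$ with $\|x\otimes y\|_\pi \leq \|x\|\,\|y\|$ --- is precisely the standard proof given in that reference, including the surjectivity step of restricting an arbitrary $T \in \mathscr{B}(X\hat{\otimes}_\pi Y, Z)$ to elementary tensors.
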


We fix some more notation that we will use from now on. Given a vector space $X$ over $\mathbb{K}$, we denote by $\overline{X}$ the complex conjugate vector space of $X$. That is, $\overline{X} = X$ with the same additive group structures, but with the scalar multiplication $\star$ defined by
\begin{equation}\label{eqn-conjugate}
\alpha \star x=\overline{\alpha}x,
\end{equation}
for all $\alpha\in \mathbb{K}$ and $x \in \overline{X}$. Clearly, if $(X, \|.\|)$ is normed linear space over $\mathbb{K}$, then there is an anti-linear isometric isomorphism between $(X, \|.\|)$ and $(\overline{X}, \|.\|)$.  If $\mathcal{H}$ is a Hilbert space, then $\overline{\mathcal{H}}$ is identified with the dual (the space of continuous linear functionals) of $\mathcal{H}$ by Riesz representation theorem.

Note that the space $\mbox{Bil}(X \oplus_\infty \overline{ X},\mathbb{K})$ is isometrically isomorphic to the space $\mbox{Ses}(X \oplus_\infty X,\mathbb{K})$. The correspondence is given by
\begin{equation}\label{correspondence}
\psi(x,y)=	\varphi(x,y),
\end{equation}
where $\psi \in \mbox{Bil}(X \oplus_\infty X,\mathbb{K})$, $\varphi \in \mbox{Ses}(X \oplus_\infty X,\mathbb{K})$, $x \in X$ and $y \in Y$.

\section{Main Results}

We begin with the proof of Theorem \ref{Complex Case}. But before we do so, let us recall the classical Banach–Alaoglu theorem: Let $X$ be a Banach space over $\mathbb{K}$. Then the closed unit ball $(X^*)_1$ of the dual $X^*$ is compact with respect to the
$\mbox{weak}^*$ topology on $X^*$.

\smallskip

\noindent \textit{\textsf{Proof of Theorem \ref{Complex Case}:}}
	Suppose $x \perp_B y$. Note that the Banach space $X$ is isometrically isomorphic to a closed subspace of $C((X^*)_1)$, where $(X^*)_1$ endowed with the $\mbox{weak}^*$ topology is a compact set. Here the correspondence is given by the formula $X \ni u \mapsto \hat{u}$, where
\[
\hat{u}(f)=f(u) \quad \quad ( f \in (X^*)_1).
\]
Next we note that the commutative Banach algebra $C((X^*)_1)$ is isometrically isomorphic to a closed subspace of $\mathscr{B}(L^2(\mu))$ for some $\sigma $-finite measure $\mu$. Here the correspondence is given by the formula $C((X^*)_1) \ni g\mapsto M_g$, where $M_g: L^2(\mu) \rightarrow L^2(\mu)$ is the multiplication operator defined by
\[
M_g(h)=gh \quad \quad (h\in L^2(\mu)).
\]
Using the above identifications, we have $M_{\hat{x}}, M_{\hat{y}} \in \mathscr{B}(L^2(\mu))$ and
\[
M_{\hat{x}}\perp_B M_{\hat{y}}.
\]
By the infinite dimensional part of Theorem \ref{thm-Bhatia and Semrl}, there exists a sequence $\{h_n\} $ of unit vectors in $L^2(\mu)$ such that
\[
\|M_{\hat{x}}(h_n)\|_{L^2(\mu)} \rightarrow \|M_{\hat{x}}\|_{\mathscr{B}(L^2(\mu))},
\]
and
\[
\langle M_{\hat{x}}(h_n), M_{\hat{y}}(h_n) \rangle_{L^2(\mu)} \rightarrow 0 \text{ as } n \rightarrow \infty.
\]
For each $n \geq 1$, define $\psi_n : X\oplus_{\infty} \overline{X} \rightarrow \mathbb{K}$ by
\[
\psi_n (z,w)= \langle M_{\hat{z}}(h_n), M_{\hat{w}}(h_n) \rangle_{L^2(\mu)} \quad \quad (z \in X, w \in \overline{X}).
\]
Now we prove that $\psi_n$ is bilinear. Clearly, $\psi_n$ is linear in its first variable.
Suppose $\alpha_1, \alpha_2 \in \mathbb{K}$ and $z, w_1,w_2 \in X$. Then
\[
\begin{split}
\psi_n (z, (\alpha_1\star w_1 + \alpha_2 \star w_2)) & =
\langle M_{\hat{z}}(h_n), M_{\widehat{\alpha_1 \star w_1 + \alpha_2 \star w_2}}( h_n) \rangle
\\
& = \langle M_{\hat{z}}(h_n), M_{\widehat{\bar{\alpha_1} w_1 + \bar{\alpha_2}  w_2}}(h_n) \rangle
\\
&= \langle \hat{z}h_n, (\bar{\alpha_1} \hat{w_1} +\bar{\alpha_2 } \hat{w_2}) h_n \rangle
\\
& =\alpha_1 \langle \hat{z}h_n, \hat{w_1}h_n \rangle + \alpha_2  \langle \hat{z}h_n, \hat{w_2} h_n \rangle,
\\
\end{split}
\]
that is
\[
\psi_n\left(z, (\alpha_1\star w_1 + \alpha_2\star w_2)\right) = \alpha_1 \psi_n(z,w_1)+ \alpha_2 \psi_n (z,w_2),
\]
and hence $\psi_n$ is bilinear for all $n \geq 1$. To prove that $\psi_n$ is bounded, suppose $(z,w)\in X \oplus_{\infty} \overline{X}$ and $\|(z,w)\|_\infty \leq 1$.
Then
\begin{align*}
|\psi_n(z,w)| & \leq \|M_{\hat{z}}\|_{\mathscr{B}(L^2(\mu))} \; \| M_{\hat{w}} \|_{\mathscr{B}(L^2(\mu))} \; \|h_n \|^2_{L^2(\mu)}
\\
& = \|\hat{z}\|_{C((X^*)_1)} \|\hat{w}\|_{C((X^*)_1)}
\\
& = \|z\|_X \; \|w\|_X,
\end{align*}
which implies that $\|\psi_n\| \leq 1$ for all $n$. Consequently
\[
\{\psi_n\}_{n\geq 1} \subseteq \text{Bil}(X\oplus_{\infty} \overline{X}, \mathbb{K}).
\]
Note, by Theorem \ref{thm-known ptp}, that
\[
\text{Bil}(X\oplus_{\infty}\overline{ X}, \mathbb{K}) \simeq \mathscr{B}(X\hat{\otimes}_\pi \overline{X}, \mathbb{K}).
\]
Here the correspondence is given by $\text{Bil}(X\oplus_{\infty}\overline{ X}, \mathbb{K}) \ni \psi \mapsto \tilde{\psi}$, where
\[
\tilde{\psi} (z\otimes w) = \psi(z,w) \quad \quad (z,w\in X).
\]
Since $\|\psi_n\| \leq 1$, we have $\|\tilde{\psi_n} \| \leq 1$ for each $n$. By applying the Banach-Alaoglu theorem, one can find a subsequence $\{\tilde{\psi}_{n_k}\}$ of $\{\tilde{\psi_n}\}$  and a map $\tilde{\psi}\in \mathscr{B}(X\hat{\otimes}_\pi \overline{X}, \mathbb{K}) $ with $\|\tilde{\psi}\| \leq 1$ such that
\[
\tilde{\psi}_{n_k} \mathop\rightarrow^{w^*} \tilde{\psi}.
\]
This yields $\psi_{n_k} \rightarrow \psi$ in the pointwise topology, that is
\[
\psi_{n_k}(z,w) \rightarrow \psi(z,w) \quad \quad (z,w\in X).
\]
By isometry, $\|\psi\|\leq 1$. Observe that
\[
\psi_{n_k}(x,x)  =  \langle M_{\hat{x}  }(h_{n_k}), M_{\hat{x}}(h_{n_k}) \rangle_{L^2(\mu)} = \|M_{\hat{x}  }(h_{n_k})\|^2_{L^2(\mu)}.
\]
But since
\[
\|M_{\hat{x}  }(h_{n_k})\|^2_{L^2(\mu)} \rightarrow \|M_{\hat{x}}\|^2_{\mathscr{B}(L^2(\mu))} = \|\hat{x}\|^2,
\]
it follows that
\[
\psi_{n_k}(x,x) \rightarrow \|x\|^2,
\]
for all $x \in X$. On the other hand, $\psi_{n_k}(x,x)\rightarrow \psi(x,x)$. Hence
\[
\psi(x,x)=\|x\|^2 \quad \quad (x \in X).
\]
Since
\[
\psi_{n_k}(x,y)=  \langle M_{\hat{x}  }(h_{n_k}), M_{\hat{x}}(h_{n_k}) \rangle_{L^2(\mu)} \rightarrow 0,
\]
we have $\psi(x,y)=0$. Note that if $\|\psi\|<1$, then $\psi(x,x)<\|x\|^{2}$. Hence $\|\psi\|=1$. Now observe that the identification in \eqref{correspondence} provides a sesquilinear map
\[
\varphi \in \text{Ses}(X\oplus_\infty X, \mathbb{K}),
\]
with
\[
\varphi(x,y)=\psi(x,y) \quad \quad (x,y\in X).
\]
This gives $\varphi(z,z)\geq 0$ for all $z \in X$, and $\varphi(z,w)=\overline{\varphi(w,z)}$ for all $z,w \in X$.
Hence $\varphi$ is a semi-inner product on $X$ such that $\|\varphi\|=1$, $\varphi(x,x)=\|x\|^{2}$ and $\varphi(x,y)=0$.

\noindent Conversely, suppose that such a $\varphi$ exists. Then for all $\lambda\in \mathbb{K}$,
\[
\|x\|^2 = |\varphi(x,x+\lambda y)| \leq \|\varphi\| \|x\| \|x+\lambda y\|,
\]
which implies that $\|x\| \leq \|x+\lambda y\|$, that is, $x \perp_B y$. This completes the proof of the theorem.

\begin{remark}
It is worth pointing out that besides Theorem \ref{thm-Bhatia and Semrl} and Theorem \ref{thm-known ptp}, the inclusion
\[
X \hookrightarrow C((X^*)_1) \hookrightarrow \mathscr{B}(L^2(\mu)),
\]
also plays an important role in our proof.
\end{remark}

Now we turn to orthogonality of elements in $C^*$-algebras. The following result follows from Theorem \ref{Complex Case}.

\begin{corollary}\label{C^*-algebra case}
	Let $\mathcal{A}$ be a $C^*$-algebra and $a,b \in \mathcal{A}$ and $a\neq 0$. Then $a\perp_B b$ if and only if there exists a bilinear map $\psi: \mathcal{A}\oplus_{\infty} \mathcal{A} \rightarrow \mathbb{C}$ such that $\|\psi\|= 1$, $\psi(a,a^*)=\|a\|^2$ and $\psi(a,b^*)=0$.
\end{corollary}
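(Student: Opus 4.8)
The plan is to deduce the corollary directly from Theorem \ref{Complex Case} by exploiting the involution $*$ on $\mathcal{A}$, which is a conjugate-linear isometric bijection. The guiding observation is that the involution converts sesquilinear forms into bilinear forms and back: if $\varphi$ is a semi-inner product (linear in the first slot, conjugate-linear in the second), then $\psi(u,v) := \varphi(u,v^*)$ is \emph{bilinear}, because the conjugate-linear map $v \mapsto v^*$ cancels the conjugate-linearity of $\varphi$ in its second argument; conversely $\varphi(u,v) := \psi(u,v^*)$ turns a bilinear form back into a sesquilinear one. Since $C^*$-algebras are complex, I work throughout with $\mathbb{K} = \mathbb{C}$.

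First I would establish the implication $a \perp_B b \Rightarrow \exists\, \psi$. Applying Theorem \ref{Complex Case} with $X = \mathcal{A}$, $x = a$, $y = b$ produces a semi-inner product $\varphi : \mathcal{A} \oplus_{\infty} \mathcal{A} \rightarrow \mathbb{C}$ with $\|\varphi\| = 1$, $\varphi(a,a) = \|a\|^2$, and $\varphi(a,b) = 0$. I then set $\psi(u,v) = \varphi(u,v^*)$ and verify three points: (i) $\psi$ is bilinear, by the cancellation noted above; (ii) $\psi(a,a^*) = \varphi(a,(a^*)^*) = \varphi(a,a) = \|a\|^2$ and $\psi(a,b^*) = \varphi(a,(b^*)^*) = \varphi(a,b) = 0$; and (iii) $\|\psi\| = \|\varphi\| = 1$, since as $v$ ranges over the unit ball of $\mathcal{A}$ so does $v^*$ (the involution being an isometry), whence the suprema defining $\|\psi\|$ and $\|\varphi\|$ coincide.

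For the converse I would argue directly, imitating the converse half of the proof of Theorem \ref{Complex Case} rather than reconstructing a semi-inner product. Given $\psi$ with the stated properties, for any $\lambda \in \mathbb{C}$ the identity $(a + \lambda b)^* = a^* + \bar\lambda\, b^*$ together with linearity of $\psi$ in its second variable gives $\psi(a,(a+\lambda b)^*) = \psi(a,a^*) + \bar\lambda\, \psi(a,b^*) = \|a\|^2$. Boundedness of $\psi$, the normalization $\|\psi\| = 1$, and the isometry of $*$ then yield
\[
\|a\|^2 = |\psi(a,(a+\lambda b)^*)| \leq \|\psi\|\,\|a\|\,\|(a+\lambda b)^*\| = \|a\|\,\|a + \lambda b\|,
\]
so that $\|a\| \leq \|a + \lambda b\|$ for every $\lambda$, i.e. $a \perp_B b$.

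Because each step reduces either to Theorem \ref{Complex Case} or to the definitional properties of the involution, I do not anticipate a serious obstacle. The only point that demands genuine care is the bookkeeping of conjugations: checking that $v \mapsto v^*$ really does turn the conjugate-linearity of the semi-inner product in its second slot into honest linearity of $\psi$, and confirming that it is the \emph{isometry} of the involution (not merely its boundedness) that forces the norm equality $\|\psi\| = \|\varphi\|$.
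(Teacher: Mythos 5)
Your proof is correct and follows essentially the same route as the paper: apply Theorem \ref{Complex Case} to get a semi-inner product $\varphi$, define $\psi(u,v)=\varphi(u,v^*)$ using the conjugate-linearity of the involution, and run the converse through the bound $\|a\|^2=|\psi(a,(a+\lambda b)^*)|\leq \|\psi\|\,\|a\|\,\|a+\lambda b\|$. The only difference is that you spell out the verifications (bilinearity, the norm equality via the isometry of $*$, and the expansion $\psi(a,(a+\lambda b)^*)=\psi(a,a^*)+\bar\lambda\,\psi(a,b^*)$) that the paper leaves implicit.
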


\begin{proof}
Suppose $a\perp_B b$. Then by Theorem \ref{Complex Case}, there exists a semi-inner product $\varphi: \mathcal{A}\oplus_{\infty} \mathcal{A} \rightarrow \mathbb{C}$ such that $\|\varphi\|= 1$, $\varphi(a,a)=\|a\|^2$ and $\varphi(a,b)=0$. Define $\psi: \mathcal{A}\oplus_{\infty} \mathcal{A} \rightarrow \mathbb{C} $ by
	$$ \psi(a,b)=\varphi(a,b^*).$$
	Then $\psi$ is the required map. To prove the converse, suppose such a $\psi$ exists. Then
\begin{align*}
\|a\|^2 &=|\psi(a, (a+\lambda b)^*|\\
& \leq \| \psi\| \|a\| \|(a+\lambda b)^* \| \\
& = \|a\| \|a+\lambda b\|,
\end{align*}
that is, $\|(a+\lambda b)\| \geq \|a\|$ for all $\lambda \in \mathbb{C}$

\end{proof}

 It is worth pointing out that Theorem \ref{Complex Case} is also applicable to finite dimensional Banach spaces. Clearly, Theorem \ref{Complex Case} and Corollary \ref{C^*-algebra case} are analogous to Bhatia and \v{S}emrl classifications (see part (ii) of Theorem \ref{thm-Bhatia and Semrl}) of orthogonality of matrices on finite dimensional Hilbert spaces. On the other hand, our results uses the Bhatia and \v{S}emrl classifications of orthogonality in the setting of infinite dimensional Hilbert spaces (see part (i) of Theorem \ref{thm-Bhatia and Semrl}). In addition, it is not completely clear if our results recovers the Bhatia and \v{S}emrl classifications of orthogonality in the setting of finite matrices.

All in all, on one hand our results are valid for general Banach spaces and rather abstract, and on the other hand our approach is intimately related to the delicate structure of projective tensor product of Banach spaces (see Chapter 2 in \cite{RR}). We  also believe that our approach to orthogonality via projective tensor product is of independent interest and may have other applications.

\vspace{0.2in}

\noindent\textbf{Acknowledgement:}
The research of the third named author is supported in part by NBHM grant NBHM/R.P.64/2014, and the Mathematical Research Impact Centric Support (MATRICS) grant, File No: MTR/2017/000522 and Core Research Grant, File No: CRG/2019/000908, by the Science and Engineering Research Board (SERB), Department of Science \& Technology (DST), Government of India. The research of the fourth author is supported  by the NBHM postdoctoral fellowship, File No:  0204/3/2020/R$\&$D-II/2445.

\end{document}